\renewcommand{\qed}{\hfill \mbox{\raggedright \rule{0.1in}{0.1in}}}
\newcommand{\cT}{\mathcal{T}}
\newcommand{\cR}{\mathcal{R}}
\newcommand{\cL}{\mathcal{L}}
\newcommand{\cP}{\mathcal{P}}
\newcommand{\triangleL}{\triangleleft_{\mathcal{L}}}
\newcommand{\triangleLq}{\trianglelefteq_{\mathcal{L}}}
\newcommand{\triangleR}{\triangleright_{\mathcal{R}}}
\begin{document}

\title*{On the Orderability Problem and the Interval Topology}

\author{Kyriakos Papadopoulos}

\institute{Kyriakos Papadopoulos \at School of Mathematics, The University of Birmingham, Edgbaston, B15 2TT, UK \email{kxp878@bham.ac.uk}}

\maketitle


\abstract*{The class of LOTS (linearly ordered topological spaces, i.e. spaces equipped with a topology
generated by a linear order) contains many important
spaces, like the set of real numbers, the set of rational numbers and the ordinals.
Such spaces have rich topological properties, which are not necessarily hereditary.
The Orderability Problem, a very important question on whether a topological space admits
a linear order which generates a topology equal to the topology of the space, was given
a general solution by J. van Dalen and E. Wattel, in 1973. In this survey we first
investigate the role of the interval topology in van Dalen's and Wattel's characterization
of LOTS, and we then examine ways to extend this model to transitive relations that
are not necessarily linear orders.}

\abstract{The class of LOTS (linearly ordered topological spaces, i.e. spaces equipped with a topology
generated by a linear order) contains many important
spaces, like the set of real numbers, the set of rational numbers and the ordinals.
Such spaces have rich topological properties, which are not necessarily hereditary.
The Orderability Problem, a very important question on whether a topological space admits
a linear order which generates a topology equal to the topology of the space, was given
a general solution by J. van Dalen and E. Wattel, in 1973. In this survey we first
examine the role of the interval topology in van Dalen's and Wattel's characterization
of LOTS, and we then discuss ways to extend this model to transitive relations that
are not necessarily linear orders.}

\section{Introduction.}
\label{sec:1}

``{\it Order is a concept as old as the idea of number and much
of early mathematics was devoted to constructing and studying
various subsets of the real line.}'' (Steve Purisch).

In S. Purisch's account of results on orderability and suborderability (see \cite{Purisch-History}),
one can read the formulation and development of several orderability problems, starting from
the beginning of the 20th century and reaching our days. By an orderability problem, in topology, we mean the following.
Let $(X,\cT)$ be a topological space and let $X$ be equipped with an order relation $<$. Under
what conditions will $\cT_<$, i.e. the topology induced by the order $<$, be equal to $\cT$?
There was no general solution to this problem until the early 70s.

The first general solution to the characterization of LOTS (linearly ordered topological spaces, that is,
spaces whose topology is generated by a linear order)
was given by J. van Dalen and E. Wattel, in 1973 (see \cite{Dalen-Wattel}). These authors
succeeded to generalize and expand the properties that appear in the real line, where
the natural topology equals the natural order topology. Their main tool were nests.
They considered a topological space $X$, whose topology is generated by a subbase $\cL \cup \cR$ of
two nests $\cL$ and $\cR$ on $X$, whose union is $T_1$-separating. By considering also the ordering which is
generated by the nest $\cL$ on $X$, namely $\triangleL$,
the authors introduced conditions such that $\cT_{\triangleL}$ to be equal to $\cT_{\cL \cup \cR}$. In this
case, they proved the space to be LOTS, while in the case where
$\cT_{\triangleL}  \subset \cT_{\cL \cup \cR}$, the space was proved to be GO (generalized ordered, i.e.
a topological subspace of a LOTS).
Both in the case of GO-spaces and of LOTS the authors demanded
$\cL \cup \cR$ to form a $T_1$-separating subbase for the topology on $X$.
The necessary and sufficient condition for both nests $\cL$, $\cR$ to be interlocking was added in order for
the space to be LOTS.

LOTS are natural occuring topological objects and are canonical building blocks for topological
examples. For example, a space which is LOTS is also monotonically normal (see for example \cite{Lutzer-LOTS}). From the other hand,
a subspace of a LOTS is not necessarily a LOTS.

In this paper we will initially use tools from \cite{Good-Papadopoulos}, where the authors
revisited and simplified J. van Dalen and E. Wattel's ideas in order to construct ordinals, and we will investigate the role of the interval
topology in their solution to the orderability problem. The interval topology can be
defined for any transitive order (see for example \cite{compendium}), and we believe that
it is a good candidate for replacing the topology that is generated by the $T_1$-separating
union of two nests, $\cL$ and $\cR$ (the topology used by van Dalen and Wattel), in
order to extend the orderability problem to non linearly ordered spaces.

\section{Preliminaries and a Few Remarks on \cite{Dalen-Wattel}.}
\label{sec:2}

In this Section we will introduce the machinery that will be needed in order to develop our ideas
in the succeeding sections. Our main reference on standard order- and lattice-theoretic definitions
will be the book \cite{compendium}. A more recent account on topological properties of ordered
structures is given in \cite{good-bijective-preimages}.

\begin{definition}
Let $(X,<)$ be a set equipped with a transitive relation $<$. We define $\uparrow{A} \subset X$ to be the set:
\[\uparrow{A} = \{x : x \in X \textrm{ and  there exists } y \in A \textrm{, such that } y <x\}.\]
We also define $\downarrow{A} \subset X$ to be the set:
\[\downarrow{A} = \{x : x \in X \textrm{ and  there exists } y \in A \textrm{, such that } x<y\}.\]
\end{definition}

More specifically, if $A= \{y\}$, then:
\[\uparrow{A} = \{x : x \in X \textrm{ and } y<x\}\] and
\[\downarrow{A} = \{x : x \in X \textrm{ and } x<y\}\]

From now on we will use the conventions $\uparrow{x} = \uparrow{\{x\}}$ and $\downarrow{x} = \downarrow{\{x\}}$.

We remind
that the {\em upper topology} $\cT_U$ is generated
by the subbase $\mathcal{S} = \{X- \downarrow x : x \in X\}$
and the {\em lower topology} $\cT_l$ is generated by the subbase $\mathcal{S} = \{X-\uparrow x : x \in X\}$.
The {\em interval topology} $\cT_{in}$ is defined as $\cT_{in}= \cT_U \vee \cT_l$, where $\vee$ stands
for supremum.

\begin{definition}\label{definition-T0-T1}
Let $X$ be a set.
\begin{enumerate}
\item A collection $\mathcal{L}$, of subsets of $X$, $T_0$-{\em separates} $X$, if and only if for all $x,y \in X$, such that $x \neq y$, there exist $L \in
\mathcal{L}$, such that $x \in L$ and $y \notin L$ or $y \in L$
and $x \notin L$.

\item Let $X$ be a set. A collection $\mathcal{L}$, of subsets of $X$, $T_1$-{\em separates} $X$, if and only if for all $x,y \in X$, such that $x \neq y$, there exist $L,L' \in
\mathcal{L}$, such that $x \in L$ and $y \notin L$ and also $y \in
L'$ and $x \notin L'$.
\end{enumerate}
\end{definition}

One can easily see the link
between Definition \ref{definition-T0-T1} and the separation axioms of topology: a topological space $(X,\mathcal{T})$ is
$T_0$ (resp. $T_1$), if and only if there is a subbase $\mathcal{S}$, for $\mathcal{T}$, which $T_0$-separates (resp. $T_1$-separates) $X$.

\begin{definition}
Let $X$ be a set and let $\mathcal{L}$ be a family
of subsets of $X$. $\mathcal{L}$ is a {\em nest}
on $X$, if for every $M, N \in \mathcal{L}$, either $M \subset N$ or $N \subset M$.
\end{definition}

\begin{definition}\label{definition-order}
Let $X$ be a set and let $\mathcal{L}$ be a nest on $X$.
We define an order relation on $X$ via the
nest $\mathcal{L}$, as follows:
\[x \triangleleft_{\mathcal{L}} y \Leftrightarrow \,\exists \, L \in \mathcal{L}, \textrm{ such that } x \in L \textrm{ and } y \notin L\]
\end{definition}

It follows from Definitions \ref{definition-T0-T1} and \ref{definition-order} that if the nest $\cL$ is $T_0$-separating, then the ordering $\triangleL$ is linear, provided the ordering is reflexive.

We note that the declaration of reflexivity in the ordering is very vital from a
purely order-theoretic point of view: a partial ordering is defined to be reflexive,
antisymmetric and transitive. A linear ordering is a partial ordering plus every two distinct
elements in the set can be compared to one another via the ordering. On the other hand, the orderability problem is in fact a topological
problem of comparing two topologies, as stated in the introductory section, and J. van Dalen's
and E. Wattel's proof of the general solution to this problem does
not examine reflexivity or antisymmetry in the ordering: only transitivity and
comparability of any two elements. More specifically, if $x$ and $y$ are distinct
elements in a set $X$, and $\triangleL$ is an ordering relation generated by a $T_0$-separating
nest on $X$, then one can easily check that it cannot happen that $x \triangleL y$ and simultaneously
$y \triangleL x$. This gives us the liberty to say that the ordering is always antisymmetric
 but, still, from an order-theoretic point of view one should state explicitly whether the
  ordering is reflexive or not. In conclusion, the characterization of LOTS in \cite{Dalen-Wattel}
refers to linearly ordered sets, but also covers cases of sets which are equipped with an ordering $<$
which appears to be a weaker version
of the linear ordering.

Having this in mind, we find it important to make a distinction between
$\triangleL$ and $\triangleLq$.

\begin{definition}\label{definition-order-reflexive}
Let $X$ be a set and $x, y \in X$. We say that $x \triangleLq y$, if and only if
either $x=y$ or there exists $L \in \cL$, such that $x \in L$ and $y \notin L$.
\end{definition}

From now on, whenever we write $x \triangleL y$, we will assume that $x \neq y$.

The order of Definition \ref{definition-order} was first introduced
in \cite{Dalen-Wattel} and was further examined in \cite{Good-Papadopoulos},
where the authors gave the following useful (for the purposes of this paper)
theorem.

\begin{theorem}\label{theorem-kyriakos chris}
Let $X$ be a set. Suppose $\cL$ and $\cR$ are two nests on $X$. $\cL \cup \cR$ is
$T_1$-separating, if and only if $\cL$ and $\cR$ are both $T_0$-separating and
$\triangleL = \triangleR$.
\end{theorem}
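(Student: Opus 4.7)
My plan is to prove both implications by first isolating the one nontrivial observation underlying the theorem: two sets from a single nest can never jointly $T_1$-separate a pair of points. Explicitly, if $M$ and $N$ lie in the same nest and satisfy $x \in M$, $y \notin M$, $y \in N$, $x \notin N$, then nestedness forces $M \subset N$ (contradicting $x \notin N$) or $N \subset M$ (contradicting $y \notin M$). Once this is noted, both directions are essentially definition chasing.

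For the forward direction, I would fix distinct $x, y$ and let $U, V \in \cL \cup \cR$ be $T_1$-separating witnesses with $x \in U$, $y \notin U$ and $y \in V$, $x \notin V$. By the nest incompatibility above, $U$ and $V$ cannot both lie in $\cL$, nor both in $\cR$; hence one lies in $\cL$ and the other in $\cR$, which immediately yields $T_0$-separation for both nests at the pair $(x,y)$. To obtain $\triangleL = \triangleR$, I would rerun the same idea at the level of single witnesses: if $x \triangleL y$ via some $L \in \cL$, then the $T_1$-witness $V$ with $y \in V$, $x \notin V$ cannot lie in $\cL$ (by nest comparison with $L$), so $V \in \cR$, which is exactly a witness for $x \triangleR y$. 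The reverse inclusion is obtained by interchanging the roles of $\cL$ and $\cR$.

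For the backward direction, I would fix $x \neq y$ and invoke the $T_0$-separation of $\cL$ to obtain $L \in \cL$ separating $x$ and $y$. Without loss of generality $x \in L$ and $y \notin L$, so $x \triangleL y$; the hypothesis $\triangleL = \triangleR$ then gives $x \triangleR y$, which unpacks to some $R \in \cR$ with $y \in R$, $x \notin R$. The pair $L, R \in \cL \cup \cR$ now $T_1$-separates $x$ and $y$, as required. I do not anticipate any genuine obstacle; the entire content of the theorem is concentrated in the one nest-comparison step above, and all remaining steps consist of unpacking Definitions \ref{definition-T0-T1} and \ref{definition-order}.
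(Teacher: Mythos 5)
The paper does not prove this theorem at all: it is quoted verbatim from \cite{Good-Papadopoulos}, so there is no in-paper argument to compare yours against. Judged on its own, your proof is correct and is the natural one. The single observation you isolate --- that two sets $M,N$ from one nest can never jointly $T_1$-separate a pair, because $M\subset N$ or $N\subset M$ contradicts one of the four membership conditions --- does carry the entire forward direction (both the $T_0$-separation of each nest and the identification of the orders), and the backward direction is the definition-chase you describe.

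One point is worth making explicit, because the paper nowhere defines it: $\triangleR$ abbreviates $\triangleright_{\cR}$, which must be read as the \emph{reverse} of the relation that Definition~\ref{definition-order} assigns to the nest $\cR$; that is, $x \triangleright_{\cR} y$ holds precisely when there is $R \in \cR$ with $y \in R$ and $x \notin R$. Your unpacking of $x \triangleR y$ as ``a set of $\cR$ containing $y$ but not $x$'' is exactly this convention, and under it every step of your argument goes through. Had you instead read $\triangleR$ as $\triangleleft_{\cR}$, the theorem would be false: for $X = \mathbb{R}$ with $\cL$ the lower open rays and $\cR$ the upper open rays, $\cL\cup\cR$ is $T_1$-separating, yet $\triangleleft_{\cL}$ is $<$ while $\triangleleft_{\cR}$ is $>$. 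So state the convention at the outset of the proof rather than leaving it implicit in the phrase ``exactly a witness''; with that clarification added, the proof is complete.
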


\begin{definition}[van Dalen \& Wattel]\label{definiton-interlocking}
Let $X$ be a set and let $\cL \subset \cP(X)$. We say that $\cL$ is interlocking
if and only if, for each $L \in \cL$, $L = \bigcap\{N \in \cL : L \subset N,\,L \neq N\}$
implies $L = \bigcup \{N \in \cL : N \subset L,\, L \neq N\}$.
\end{definition}

The following Theorem, stemming from Definition \ref{definiton-interlocking}, gives
conditions such as a nest to be interlocking in linearly ordered spaces, in particular.

\begin{theorem}[See \cite{Good-Papadopoulos}]\label{theorem-interlocking}
Let $X$ be a set and let $\cL$ be a $T_0$-separating nest on $X$. The following are equivalent:
\begin{enumerate}

\item $\cL$ is interlocking;

\item for each $L \in \cL$, if $L$ has a $\triangleL$-maximal element, then $X-L$ has a $\triangleL$-minimal element;

\item for all $L \in \cL$, either $L$ has no $\triangleL$-maximal element or $X-L$ has a $\triangleL$-minimal
element.

\end{enumerate}
\end{theorem}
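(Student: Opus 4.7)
The plan is to recast Definition \ref{definiton-interlocking} entirely in terms of the total order $\triangleL$ (which is total because $\cL$ is $T_0$-separating) by proving two independent order-theoretic translations of the two set conditions that appear in the interlocking property. I first note that (2) and (3) are merely two syntactic formulations of the same implication, since ``$P \Rightarrow Q$'' is logically equivalent to ``$\neg P \vee Q$''; the substantive work therefore lies in proving (1) $\Leftrightarrow$ (2).

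To this end I would establish, for each $L \in \cL$, the two auxiliary equivalences: \textbf{(A)} $L = \bigcap\{N \in \cL : L \subsetneq N\}$ if and only if $X \setminus L$ has no $\triangleL$-minimum; and \textbf{(B)} $L = \bigcup\{N \in \cL : N \subsetneq L\}$ if and only if $L$ has no $\triangleL$-maximum. Granting these, interlocking reads ``no $\triangleL$-minimum of $X\setminus L$ implies no $\triangleL$-maximum of $L$,'' which is exactly the contrapositive of (2), completing the proof.

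For (A), the inclusion $L \subseteq \bigcap\{N : L \subsetneq N\}$ is immediate. If some $x$ lies in the intersection but not in $L$, I would show that $x$ is the $\triangleL$-minimum of $X \setminus L$: for any $y \in X \setminus L$, should $y \triangleL x$ be witnessed by some $L' \in \cL$, the nest-comparability of $L'$ with $L$ forces a contradiction in every case (if $L \subsetneq L'$ then $x \in L'$ by hypothesis; if $L' \subseteq L$ then $y \in L$). Conversely, if $m'$ is the $\triangleL$-minimum of $X \setminus L$, then for every $N \supsetneq L$ one picks $y \in N \setminus L$ and uses $m' \triangleLq y$ together with nest-comparability to conclude $m' \in N$, so $m'$ lies in the intersection but not in $L$.

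For (B), the inclusion $\bigcup\{N : N \subsetneq L\} \subseteq L$ is immediate. If $L$ equals this union and $m$ were a $\triangleL$-maximum of $L$, one could pick $N \subsetneq L$ with $m \in N$ and any $z \in L \setminus N$; then $N$ itself witnesses $m \triangleL z$, contradicting maximality. Conversely, if $L$ has no maximum, each $x \in L$ admits a strictly larger $y \in L$ witnessed by some $N \in \cL$ with $x \in N$ and $y \notin N$, and nest-comparability of $N$ with $L$ forces $N \subsetneq L$, so $x$ lies in the union. The main obstacle I anticipate is the case analysis in (A), where one must carefully combine the nest condition, $T_0$-separation, and the definition of $\triangleL$ to exclude the unwanted comparabilities; in particular, the degenerate situation where $L$ is maximal in $\cL$ (so $\{N : L \subsetneq N\}$ is empty and the intersection is taken to be $X$) requires a short side check showing that $X \setminus L$ must then be empty or a singleton, keeping the equivalence valid.
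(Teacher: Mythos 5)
The paper states this theorem without proof, deferring to \cite{Good-Papadopoulos}, so there is no in-paper argument to compare against; your proposal is a correct and complete proof. The decomposition into the two order-theoretic translations (A) and (B), the observation that (2) and (3) are logically identical, and the side check for the degenerate case where $L$ is $\subseteq$-maximal in $\cL$ (forcing $X\setminus L$ to be empty or a singleton) all hold up, using only nest-comparability, $T_0$-separation (for totality of $\triangleL$), and the antisymmetry of $\triangleL$ noted in Section 2.
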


Theorem \ref{theorem-interlocking} permits us to say that if $\cL$ is a $T_0$-separating nest on $X$ and if for every $L \in \cL$, $X-L$ has a minimal element,
then $\cL$ is interlocking.

We remark that the subset $X = [0,1) \cup \{2\}$, of the real line, together with its subspace topology inherited from the topology
of the real line, is a non-compact GO-space, but not a LOTS: the ordering $\triangleL$, where $\cL = X \cap \{(-\infty,a): a \in \mathbb{R}\}$, cannot
``spot'' the difference between $X$ and the space $Y = [0,2]$, because it cannot
tell whether there is a gap between $[0,1)$ and $2$ or not. The property of interlocking (Theorem \ref{theorem-interlocking}) is
what guarantees that there are not such gaps. We will now give the version of the solution to the
orderability problem that was stated by van Dalen and Wattel, as it appeared in \cite{Good-Papadopoulos}.

\begin{theorem}[van Dalen \& Wattel]\label{theorem-LOTS-GO}
Let $(X,\mathcal{T})$ be a topological space. 
\begin{enumerate}

\item If $\mathcal{L}$ and $\mathcal{R}$ are two nests of open
sets, whose union is $T_1$-separating, then every $\triangleleft_{\mathcal{L}}$-order
open set is open, in $X$.

\item $X$ is a GO space, if and only if there are two nests, $\mathcal{L}$ and $\mathcal{R}$,
of open sets, whose union is $T_1$-separating and forms a subbase for $\cT$.

\item $X$ is a LOTS, if and only if there are two interlocking nests $\cL$ and $\cR$,
of open sets, whose union is $T_1$-separating and forms a subbase for $\cT$.

\end{enumerate}
\end{theorem}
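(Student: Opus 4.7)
The plan is to treat the three parts in sequence. For (1), I would unpack the subbasic open sets of $\cT_{\triangleL}$ directly from Definition \ref{definition-order}: the set $\{y : y \triangleL x\}$ equals $\bigcup\{L \in \cL : x \notin L\}$, a union of $\cT$-open sets, hence $\cT$-open. For the dual subbasic set $\{y : x \triangleL y\}$, Theorem \ref{theorem-kyriakos chris} supplies $\triangleL = \triangleR$, with $x \triangleR y$ meaning there exists $R \in \cR$ with $y \in R$, $x \notin R$; hence this set equals $\bigcup\{R \in \cR : x \notin R\}$ and is again $\cT$-open. Thus $\cT_{\triangleL} \subseteq \cT$.

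For (2) $(\Leftarrow)$, I would combine (1) with a convex-base observation. Each $L \in \cL$ is a $\triangleL$-down-set: if $y \triangleL x$ with $x \in L$, then any $L' \in \cL$ witnessing $y \in L'$, $x \notin L'$ cannot satisfy $L \subseteq L'$ by the nest property, so $L' \subseteq L$ and $y \in L$. Symmetrically, every $R \in \cR$ is a $\triangleL$-up-set. Finite intersections of down-sets and up-sets are $\triangleL$-convex, so the canonical base generated by the subbase $\cL \cup \cR$ consists of $\triangleL$-convex $\cT$-open sets, which together with (1) establishes the GO structure. For $(\Rightarrow)$, starting from a witnessing order $<$ for the GO structure, I would let $\cL$ and $\cR$ be the collections of all $\cT$-open $<$-initial and $<$-final segments, respectively. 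Totality of $<$ makes each a nest; $T_1$-separation holds because $\cT$ refines $\cT_<$, so that for $x < y$ the sets $\{z : z < y\}$ and $\{z : z > x\}$ supply the required separating members; and a standard convexity argument expresses every $<$-convex $\cT$-open $U$ as $L \cap R$ with $L \in \cL$, $R \in \cR$, so $\cL \cup \cR$ is a subbase.

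For (3), $(\Rightarrow)$ follows by applying the construction of (2) to a LOTS: any open $<$-initial segment $L$ with $<$-maximum $m$ must have $m$ admitting an immediate $<$-successor $m'$ (otherwise $L$ would fail to be open at $m$), and $m'$ is then the $<$-minimum of $X \setminus L$, verifying condition (2) of Theorem \ref{theorem-interlocking}. For $(\Leftarrow)$, the GO argument of (2) reduces the task to showing $\cT \subseteq \cT_{\triangleL}$. For each $L \in \cL$, either $L$ has no $\triangleL$-maximum, in which case $L = \bigcup_{x \in L}\{y : y \triangleL x\}$ exhibits $L$ as a union of $\cT_{\triangleL}$-subbasic opens; or $L$ has a $\triangleL$-maximum $m$ and interlocking supplies a $\triangleL$-minimum $n$ of $X \setminus L$, with no element strictly between, whence $L = \{y : y \triangleL n\} \in \cT_{\triangleL}$. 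A dual argument handles $\cR$, so $\cL \cup \cR \subseteq \cT_{\triangleL}$ and hence $\cT \subseteq \cT_{\triangleL}$.

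The main obstacle is the $(\Leftarrow)$ direction of (3), precisely the second case above: when some $L \in \cL$ has a $\triangleL$-maximum but no matching minimum in the complement, as in the example $[0,1) \cup \{2\}$ discussed just before the theorem, we cannot represent $L$ as a union of $\cT_{\triangleL}$-subbasic opens, and the GO structure fails to lift to a LOTS. Interlocking, via Theorem \ref{theorem-interlocking}, is the precise combinatorial fix: it promotes every such boundary $L$ to a $\triangleL$-order-open set, and this is the only ingredient added to the GO proof.
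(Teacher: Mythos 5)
The paper does not prove this theorem: it is quoted from van Dalen and Wattel as restated in \cite{Good-Papadopoulos}, so there is no internal argument to compare yours against. Judged on its own, your proof is essentially the standard one and its key moves are sound: using Theorem \ref{theorem-kyriakos chris} to convert the upper rays $\{y : x \triangleL y\}$ into unions of members of $\cR$ (this is exactly where $T_1$-separation, as opposed to mere $T_0$-separation of $\cL$, is needed); observing that members of $\cL$ are $\triangleL$-down-sets and members of $\cR$ are $\triangleL$-up-sets, so the generated base is convex; and using interlocking, via condition (2) of Theorem \ref{theorem-interlocking}, to write an $L$ with a $\triangleL$-maximum as the single ray $\{y : y \triangleL n\}$ where $n$ is the minimum of $X - L$. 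Two points deserve explicit mention if you write this up in full. First, in (2)$(\Leftarrow)$ you pass from ``$\cT_{\triangleL} \subseteq \cT$ plus a convex open base'' to ``GO-space''; since the paper defines a GO-space as a subspace of a LOTS, this step silently invokes Lutzer's equivalence of the two characterizations (or requires the explicit construction of an ambient LOTS), and should be cited or carried out. Second, in (3)$(\Rightarrow)$ your nests of all open initial and final segments contain $X$ itself, and if $X$ has a $<$-maximum then $X \in \cL$ has a $\triangleL$-maximal element while $X - X = \emptyset$ has no minimal element, so condition (2) of Theorem \ref{theorem-interlocking} fails as literally stated; one must restrict to proper segments (or adopt the usual convention excluding $\emptyset$ and $X$ from the nests) for the interlocking claim to go through. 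These are standard technicalities rather than gaps in the idea; the argument is otherwise correct.
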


In the section that follows, we will examine particular properties of the
interval topology, when the ordering is generated by a $T_0$-separating nest,
and we will see the key role that it plays in the characterization of LOTS.
The topology $\cT_{\cL \cup \cR}$ does not have any particular topological
meaning when the union of $\cL$ and $\cR$ is not $T_1$-separating. The interval topology
though, as being more flexible from the way it is defined, can replace the $\cT_{\cL \cup \cR}$ topology, if subjected to
certain conditions. We will see this in more detail in Section 3 that follows.

\section{Some Further Remarks on the Orderability Problem.}
\label{sec:3}

\begin{remark}\label{remark-interval-in-T0-2-cases}
Let $X$ be a set and let $\cL$ be a $T_0$-separating nest on $X$.

If $\triangleLq$ is reflexive, then obviously the interval topology $\cT_{in}^{\triangleLq}$, via $\triangleLq$, will be
equal to the topology $\cT_{\triangleL}$ (for a rigorous proof, one should add $T_0$-separation
in Lemma \ref{lemma-lower-upper-via-nests} of Section 4). In addition, the order topology $\cT_{\triangleLq}$
will be equal to the discrete topology on $X$.

If $\triangleL$ is irreflexive, then the interval
topology via $\triangleL$ will be equal to the discrete topology on $X$. Indeed, $\downarrow{a} = \{x \in X : x \triangleL a\}$
and so $X- \downarrow{a} = \{x \in X : a \triangleLq x\} = (-\infty,a]$. In
a similar fashion, $x- \uparrow{a} = [a,\infty)$ and so $(-\infty,a] \cap [a,\infty) = \{a\}$.
\end{remark}

Given the conditions in Theorem \ref{theorem-LOTS-GO} and the observations in Remark \ref{remark-interval-in-T0-2-cases}, we get the following comparisons for the topologies $\cT_{\triangleL},\cT_{\triangleLq},\cT_{in}^{\triangleL},\cT_{in}^{\triangleLq}$, on a space $X$:

\begin{lemma}\label{lemma-comparing-all-topologies-vanDalen}
\begin{enumerate}

\item $\cT_{\triangleL} = \cT_{\cL \cup \cR} = \cT_{in}^{\triangleLq} \subseteq \cT_{in}^{\triangleL} = \cT_{\triangleLq}$,
provided that $\cL$ and $\cR$ are interlocking and $\cL \cup \cR$ $T_1$-separates $X$.

\item $\cT_{\triangleL} = \cT_{in}^{\triangleLq} \subseteq \cT_{\cL \cup \cR} \subseteq \cT_{in}^{\triangleL} = \cT_{\triangleLq}$,
provided that $\cL \cup \cR$ $T_1$-separates $X$.

\end{enumerate}
\end{lemma}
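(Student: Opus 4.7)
The plan is to assemble the two chains by splicing together Theorem \ref{theorem-LOTS-GO}, Theorem \ref{theorem-kyriakos chris}, and Remark \ref{remark-interval-in-T0-2-cases}; essentially no new calculation is required, only careful bookkeeping of which hypothesis unlocks which equality.

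First I would note that in both parts we assume $\cL \cup \cR$ is $T_1$-separating, so Theorem \ref{theorem-kyriakos chris} gives that $\cL$ and $\cR$ are individually $T_0$-separating and that $\triangleL = \triangleR$. This puts us inside the hypotheses of Remark \ref{remark-interval-in-T0-2-cases}, from which I extract two facts that are used throughout: (a) $\cT_{in}^{\triangleLq} = \cT_{\triangleL}$, because the reflexive order makes $X\setminus\downarrow a = \{x : a \triangleL x\}$ and symmetrically $X\setminus\uparrow a = \{x : x \triangleL a\}$; and (b) $\cT_{in}^{\triangleL} = \cT_{\triangleLq} = $ the discrete topology on $X$, because with the irreflexive order the subbasic sets $X\setminus\downarrow a$ and $X\setminus\uparrow a$ contain $a$ itself, and their intersection is $\{a\}$.

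For part (1), the interlocking hypothesis together with $T_1$-separation places us in case (3) of Theorem \ref{theorem-LOTS-GO}, i.e.\ $X$ is LOTS, so the subbase topology and the order topology agree: $\cT_{\cL \cup \cR} = \cT_{\triangleL}$. Combining with (a) gives $\cT_{\triangleL} = \cT_{\cL \cup \cR} = \cT_{in}^{\triangleLq}$, while (b) gives $\cT_{in}^{\triangleL} = \cT_{\triangleLq}$. The single remaining inclusion $\cT_{in}^{\triangleLq} \subseteq \cT_{in}^{\triangleL}$ is immediate because the right-hand side is discrete. For part (2), I drop the interlocking assumption and instead invoke part (2) of Theorem \ref{theorem-LOTS-GO} (the GO case) to get only the inclusion $\cT_{\triangleL} \subseteq \cT_{\cL \cup \cR}$; the example with $[0,1)\cup\{2\}$ already recalled in the paper shows the inclusion can be strict. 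The further inclusion $\cT_{\cL \cup \cR} \subseteq \cT_{in}^{\triangleL}$ is again trivial since $\cT_{in}^{\triangleL}$ is discrete, and the outer equalities $\cT_{\triangleL} = \cT_{in}^{\triangleLq}$ and $\cT_{in}^{\triangleL} = \cT_{\triangleLq}$ are supplied once more by (a) and (b).

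The only genuinely non-trivial step is the appeal to Remark \ref{remark-interval-in-T0-2-cases}, and therefore the main obstacle (noted parenthetically in the Remark itself) is verifying that $T_0$-separation of $\cL$ is what is needed in order to identify $\cT_{in}^{\triangleLq}$ with $\cT_{\triangleL}$; without it one only gets the inclusion $\cT_{in}^{\triangleLq} \subseteq \cT_{\triangleL}$, because a subbasic set $X\setminus\downarrow a$ is the desired ray $\{x : a \triangleLq x\}$ only when the order $\triangleL$ is linear, which is guaranteed here by Theorem \ref{theorem-kyriakos chris}. Once this single point is in place, both chains reduce to citations plus the observation that the discrete topology dominates everything.
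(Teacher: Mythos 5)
Your proposal is correct and follows essentially the same route the paper intends: the lemma is presented there as a direct assembly of Theorem \ref{theorem-LOTS-GO}, Theorem \ref{theorem-kyriakos chris} and Remark \ref{remark-interval-in-T0-2-cases}, which is exactly what you do. Your explicit note that $T_0$-separation of $\cL$ (obtained from Theorem \ref{theorem-kyriakos chris}) is what licenses the identification $\cT_{in}^{\triangleLq}=\cT_{\triangleL}$ is the same caveat the paper flags parenthetically in the Remark, so nothing is missing.
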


Lemma 1 permits us to restate Theorem \ref{theorem-LOTS-GO}, using the interval topology.
\begin{corollary}
A topological space $(X,\cT)$ is:
\begin{enumerate}

\item a LOTS, iff there exists a nest $\cL$ on $X$, such that $\cL$ is $T_0$-separating and interlocking
and also $\cT = \cT_{in}^{\triangleLq}$.

\item a GO-space, iff there exists a nest $\cL$ on $X$, such that $\cL$ is $T_0$-separating
and also $\cT = \cT_{in}^{\triangleLq}$.

\end{enumerate}
\end{corollary}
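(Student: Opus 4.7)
The plan is to derive both equivalences as direct corollaries of Theorem \ref{theorem-LOTS-GO}, combined with Theorem \ref{theorem-kyriakos chris}, Lemma \ref{lemma-comparing-all-topologies-vanDalen}, and the observation recorded in Remark \ref{remark-interval-in-T0-2-cases} that the interval topology $\cT_{in}^{\triangleLq}$ collapses onto the order topology $\cT_{\triangleL}$ whenever $\cL$ is $T_0$-separating. In effect, the corollary is a repackaging of van Dalen and Wattel's characterization in which the second nest $\cR$ has been eliminated in favour of the interval topology generated by $\triangleLq$.

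For part (1), in the forward direction I assume $X$ is a LOTS. Theorem \ref{theorem-LOTS-GO}(3) supplies two interlocking nests $\cL,\cR$ of open sets with $\cL\cup\cR$ a $T_1$-separating subbase for $\cT$. By Theorem \ref{theorem-kyriakos chris}, $\cL$ is $T_0$-separating, and it is interlocking by hypothesis; Lemma \ref{lemma-comparing-all-topologies-vanDalen}(1) then yields the chain $\cT=\cT_{\cL\cup\cR}=\cT_{in}^{\triangleLq}$, exhibiting the required nest. Conversely, given a $T_0$-separating interlocking nest $\cL$ with $\cT=\cT_{in}^{\triangleLq}$, the $T_0$-separating property forces $\triangleL$ to be a linear order, while the remark identifies $\cT_{in}^{\triangleLq}$ with $\cT_{\triangleL}$; hence $\cT$ is the order topology of a linear order and $X$ is a LOTS. (Interlocking is not strictly needed for this converse, being absorbed into the conclusion that the space is LOTS rather than only GO.)

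Part (2) runs along identical lines, invoking Theorem \ref{theorem-LOTS-GO}(2) in the forward direction and the equality $\cT_{in}^{\triangleLq}=\cT_{\triangleL}$ in the converse. The main obstacle I anticipate sits in the forward implication of (2): Lemma \ref{lemma-comparing-all-topologies-vanDalen}(2) only asserts the inclusion $\cT_{in}^{\triangleLq}\subseteq\cT_{\cL\cup\cR}$, so one cannot read off the desired identity $\cT=\cT_{in}^{\triangleLq}$ from a generic pair of nests produced by the GO-characterization. The nest $\cL$ must be chosen so that every subbasic member of $\cT_{\cL\cup\cR}$ is already recovered by the interval topology of $\triangleLq$; this is the one step where the argument cannot be a purely formal chaining of the earlier results, and where the real work of the proof lies.
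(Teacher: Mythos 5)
Your treatment of part (1) is sound and coincides with what the paper intends: the corollary is stated without a written proof, as an immediate restatement of Theorem \ref{theorem-LOTS-GO} obtained by chaining Theorem \ref{theorem-kyriakos chris}, Lemma \ref{lemma-comparing-all-topologies-vanDalen}(1) and Remark \ref{remark-interval-in-T0-2-cases}, which is exactly the argument you write down. Your parenthetical remark that interlocking is not needed for the direction from the nest back to the LOTS conclusion is also correct.

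The difficulty you isolate in the forward implication of part (2) is a genuine gap, and it is sharper than you suggest: it cannot be closed by a more careful choice of $\cL$. By Remark \ref{remark-interval-in-T0-2-cases}, for any $T_0$-separating nest $\cL$ the topology $\cT_{in}^{\triangleLq}$ coincides with the order topology $\cT_{\triangleL}$ of the linear order $\triangleL$. Hence the condition ``$\cL$ is $T_0$-separating and $\cT=\cT_{in}^{\triangleLq}$'' already asserts that $\cT$ is the order topology of a linear order, i.e.\ that $X$ is a LOTS; it is therefore equivalent to the condition in part (1), not weaker. The forward implication of part (2) consequently fails for any GO-space that is not a LOTS (the Sorgenfrey line, for instance): whatever nests Theorem \ref{theorem-LOTS-GO}(2) produces, Lemma \ref{lemma-comparing-all-topologies-vanDalen}(2) yields only $\cT_{in}^{\triangleLq}\subseteq\cT_{\cL\cup\cR}=\cT$, and this inclusion is strict exactly when the space fails to be orderable. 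So the ``real work'' you defer is not merely hard, it is impossible as stated; a correct interval-topology reformulation of the GO case must weaken the equality to the inclusion $\cT_{in}^{\triangleLq}\subseteq\cT$ together with an additional condition recovering the subbase $\cL\cup\cR$ (equivalently, a base of $\triangleL$-convex open sets), rather than demand $\cT=\cT_{in}^{\triangleLq}$ outright. You should flag this as an error in the statement rather than as a step awaiting a cleverer construction.
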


Our proposed weaker version of the orderability problem, that will be presented Section 5,
 will be based on observations on Lemma \ref{lemma-comparing-all-topologies-vanDalen}. Knowing that the interval topology
 can be defined via any transitive relation, a weaker version of the orderability question can be expressed
 as follows.

{\bf Question:} Let $X$ be a set equipped with a transitive relation $<$ and the interval topology $\cT_{in}^\le$,
 defined via $\le$, where $\le$ is $<$ plus reflexivity. Under which necessary and sufficient conditions will $\cT_{<}$ be equal to $\cT_{in}^{\le}$?

 In this paper we give a partial answer to this question, through Theorem \ref{theorem-stronger-orderability}.

\section{The Order and the Interval Topologies in the Light of Nests.}
\label{sec:4}

In order to give an answer to the Question of Section 3, we will need first to see what form do
the topologies $\cT_{\triangleL}$ and $\cT_{in}^{\triangleLq}$ take, when the nest $\cL$ is not
necessarily $T_0$-separating.

\begin{lemma}\label{lemma-orders-via-nests}
Let $X$ be a set and let $\cL$ be a nest on $X$. Let also $\Delta = \{(x,x): x \in X\}$. Then:
\begin{enumerate}
\item $\triangleL = \bigcup_{L \in \cL} [L \times (X-L)]$.

\item $\ntriangleleft_{\cL} = X \times X - \triangleL = \bigcap_{L \in \cL} [((X-L) \times X)  \cup (X \times L)]$.

\item $\ntrianglelefteq_{\cL} = \bigcap_{L \in \cL}[((X-L)\times X) \cup (X \times L)] \cap (X-\Delta)$

\end{enumerate}
\end{lemma}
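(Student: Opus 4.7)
The plan is to derive all three identities almost mechanically from Definitions \ref{definition-order} and \ref{definition-order-reflexive} together with a single set-theoretic fact about complements of rectangles in $X \times X$. Interestingly, the fact that $\cL$ is a nest is never actually used in the argument; the statement is a pure definition-chase.

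For (1), I would unpack Definition \ref{definition-order} directly: the assertion $x \triangleL y$ reads ``$\exists L \in \cL$ with $x \in L$ and $y \notin L$'', and the inner conjunction is precisely $(x,y) \in L \times (X-L)$. Quantifying existentially over $L$ rewrites the condition as membership in the union $\bigcup_{L \in \cL} L \times (X-L)$, which is (1).

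For (2), I would take complements in $X \times X$ of both sides of (1). On the left this is $\ntriangleleft_{\cL}$ by definition, and De Morgan converts the union on the right into an intersection. The only substantive step is the rectangle-complement identity
\[
(X \times X) - \bigl(L \times (X-L)\bigr) \;=\; \bigl((X-L) \times X\bigr) \cup (X \times L),
\]
which holds because $(x,y) \notin L \times (X-L)$ iff $x \notin L$ or $y \in L$ (using $X - (X-L) = L$, with no hypothesis on $\cL$ required). Substituting this into the intersection yields the claimed expression.

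For (3), I would combine (2) with Definition \ref{definition-order-reflexive}, which gives $\triangleLq \;=\; \triangleL \cup \Delta$. Complementing in $X \times X$ and distributing the complement over the union,
\[
\ntrianglelefteq_{\cL} \;=\; (X \times X) - (\triangleL \cup \Delta) \;=\; \ntriangleleft_{\cL} \;\cap\; \bigl((X \times X) - \Delta\bigr),
\]
and substituting the formula from (2) for $\ntriangleleft_{\cL}$ produces the stated identity. No genuine obstacle arises; the only non-trivial ingredient is the rectangle-complement identity invoked in step (2), and everything else is definition-chasing plus a single application of De Morgan.
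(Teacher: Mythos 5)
Your proof is correct, and it coincides with what the paper intends: the paper states this lemma without proof, treating it as an immediate consequence of Definitions \ref{definition-order} and \ref{definition-order-reflexive}, and your definition-chase (unpacking $\triangleL$ as a union of rectangles, complementing via De Morgan and the rectangle-complement identity, then intersecting with the complement of $\Delta$) is exactly that routine argument carried out in full. Your observation that the nest hypothesis on $\cL$ is never used is also accurate.
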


{\bf Notation:} From now on, if $U \subset X \times X$, then $U(x) = \{y \in X : (x,y) \in U\}$.

\begin{lemma}\label{lemma-lower-upper-via-nests}
\begin{enumerate}

\item For each $x \in X$, $X - \uparrow x = \bigcap\{L \in \cL : x \in L\} - \{x\}$.

\item For each $x \in X$, $X- \downarrow x = \bigcap\{X-L : x \in X-L\} \cup \{x\}$
\end{enumerate}
\end{lemma}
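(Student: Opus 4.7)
The plan is to unwind the definitions of $\uparrow$, $\downarrow$ and $\triangleL$ and verify each identity by a direct pointwise membership argument, with a small amount of bookkeeping at the diagonal $y = x$. The single observation driving both parts is essentially the contrapositive of Definition \ref{definition-order}: the relation $x \triangleL y$ fails exactly when every $L \in \cL$ containing $x$ also contains $y$, or, equivalently, when every $L \in \cL$ that avoids $y$ also avoids $x$.

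For part (1), fix $y \in X$ with $y \neq x$. By Definition \ref{definition-order}, $y \in \uparrow x$ iff $x \triangleL y$, i.e.\ iff some $L \in \cL$ satisfies $x \in L$ and $y \notin L$. Negating, $y \notin \uparrow x$ iff every $L \in \cL$ containing $x$ also contains $y$, that is, $y \in \bigcap\{L \in \cL : x \in L\}$. The diagonal case $y = x$ is settled by the standing convention recorded just before Theorem \ref{theorem-kyriakos chris} that $x \triangleL y$ is only asserted when $x \neq y$; this is precisely what forces the removal of $\{x\}$ on the right-hand side.

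For part (2), the argument is dual and proceeds through the complements $X - L$. For $y \neq x$ one has $y \in \downarrow x$ iff $y \triangleL x$, iff there exists $L \in \cL$ with $y \in L$ and $x \notin L$; rephrased via complements, iff some $L$ with $x \in X - L$ satisfies $y \notin X - L$. Negating, $y \notin \downarrow x$ iff every $L$ with $x \in X - L$ also satisfies $y \in X - L$, that is, $y \in \bigcap\{X - L : x \in X - L\}$; the explicit union with $\{x\}$ on the right-hand side accounts for the diagonal in the same way as in (1). I do not foresee any genuine obstacle here: both equalities reduce to routine set-theoretic bookkeeping, and the only delicate point is the treatment of $y = x$, which is exactly what determines whether one adjoins or removes $\{x\}$ on the right-hand side.
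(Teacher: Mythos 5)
Your off-diagonal computation is correct and is essentially the paper's own argument with the $U(x)$-slice formalism of Lemma \ref{lemma-orders-via-nests} stripped away: for $y \neq x$, negating ``there exists $L$ with $x \in L$ and $y \notin L$'' gives $y \in \bigcap\{L \in \cL : x \in L\}$, and dually for (2). The genuine problem is the diagonal, which you yourself identify as the only delicate point and then dispose of by appeal to the convention that $x \triangleL y$ presupposes $x \neq y$. That convention makes $\triangleL$ irreflexive, so under your reading $x \notin \uparrow x$ and hence $x \in X - \uparrow x$; but $x$ is deleted from the right-hand side of (1) (and $x$ certainly belongs to $\bigcap\{L \in \cL : x \in L\}$), so the two sides would then disagree at the point $x$. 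In other words, the convention you cite forces the opposite of what you claim: it would yield $X - \uparrow x = \bigcap\{L \in \cL : x \in L\}$ with no deletion. The identity in (1) as stated requires $\uparrow$ to be taken with respect to the reflexive relation $\triangleLq$, so that $x \triangleLq x$ places $x$ inside $\uparrow x$ and hence outside $X - \uparrow x$; this is exactly how the paper argues, writing $y \in X - \uparrow x$ iff $x \ntrianglelefteq_{\cL} y$ and invoking Lemma \ref{lemma-orders-via-nests}(3), whose extra factor involving the complement of $\Delta$ is precisely what produces the $-\{x\}$.

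The same issue undermines your claim that (2) is handled ``in the same way.'' With one fixed relation, the point $x$ cannot simultaneously be excluded from $X - \uparrow x$ (as the $-\{x\}$ in (1) demands) and included in $X - \downarrow x$ (as the $\cup\,\{x\}$ in (2) demands): relative to $\triangleLq$ one gets a deletion of $\{x\}$ in both parts, while relative to the irreflexive $\triangleL$ one gets $x$ as a member of both left-hand sides. So the two correction terms point to different relations, and a proof cannot absorb both into a single appeal to the same convention. You need to commit explicitly to the relation used in each part (the paper's proof of (1) commits to $\triangleLq$), verify the membership of $x$ on each side from that choice, and only then conclude whether $\{x\}$ is to be removed or adjoined; as written, your treatment of the diagonal proves the wrong statement in part (1) and leaves part (2) unexamined.
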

\begin{proof}
\begin{enumerate}
\item $y \in X - \uparrow x$, if and only if $x \ntrianglelefteq_{\cL} y$, if and only
if $(x,y) \notin \triangleLq$, if and only if (by Lemma \ref{lemma-orders-via-nests})
$y \in \bigcap_{L \in \cL} \{((X-L) \times X) (x)  \cup (X \times L) (x)\}$ and $y \neq x$,
if and only if $y \in \bigcap \{L \in \cL : x \in X\}-\{x\}$.

\item In a similar fashion to 1. using Lemma \ref{lemma-orders-via-nests}.
\end{enumerate}
\qed
\end{proof}

Let us now have a closer look to the order topology. It is known
that if a set $X$ is equipped with an order $<$, then the order topology $\cT_<$, on $X$,
will be the supremum of two topologies, namely the topology $\cT_\leftarrow$ and the
topology $\cT_\rightarrow$. In particular, $\cT_\leftarrow$ is generated by a subbasis
$\mathcal{S}_\leftarrow = \{(\leftarrow,a): a \in X\}$, where $(\leftarrow,a)= \{x \in X : x <a\}$.
Similarly, the topology $\cT_\rightarrow$ is generated by a subbasis $\mathcal{S}_\rightarrow = \{(a,\rightarrow): a \in X\}$,
where $(a,\rightarrow) = \{x \in X : a <x\}$. We will now see how these subbases look
like when the ordering is generated by a not necessarily $T_0$-separating nest, that is $< = \triangleL$,
where $<$ stands for any (not necessarily linear) order.

{\bf Notation:} Let $\cL$ be a nest on a set $X$. Then, $\cL_a = \{L \in \cL : a \in L\}$, for each $a \in X$.

\begin{lemma}\label{lemma-order-generalization}
\begin{enumerate}
\item For each $a \in X$, $(a,\rightarrow) = \bigcup_{L \in \cL_a} X-L$\,.

\item For each $a \in X$, $(\leftarrow,a) = \bigcup_{L \notin \cL_a} L$\,.

\end{enumerate}
\end{lemma}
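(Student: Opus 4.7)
The plan is to prove both parts by straightforward unpacking of definitions, since no separation or interlocking hypothesis is imposed on $\cL$ here. The lemma is essentially a bookkeeping result rewriting the sub-basic sets of the order topology $\cT_{\triangleL}$ in purely nest-theoretic language, via the abbreviation $\cL_a$.

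For part (1), I would start from $(a,\rightarrow) = \{x \in X : a \triangleL x\}$ and apply Definition \ref{definition-order} directly: $a \triangleL x$ means there exists $L \in \cL$ with $a \in L$ and $x \notin L$. The condition ``$a \in L$'' is exactly ``$L \in \cL_a$'', and ``$x \notin L$'' is exactly ``$x \in X-L$''. Thus $x \in (a,\rightarrow)$ iff $x \in X-L$ for some $L \in \cL_a$, which is exactly the statement $x \in \bigcup_{L \in \cL_a}(X-L)$. This is a single biconditional chain with no obstacles.

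For part (2), the argument is symmetric. Starting from $(\leftarrow,a) = \{x \in X : x \triangleL a\}$ and invoking Definition \ref{definition-order}, $x \triangleL a$ means there exists $L \in \cL$ with $x \in L$ and $a \notin L$. Now ``$a \notin L$'' translates to ``$L \notin \cL_a$'' and ``$x \in L$'' is kept as is, yielding $x \in (\leftarrow,a)$ iff $x \in L$ for some $L \in \cL$ with $L \notin \cL_a$, which is the required identity $\bigcup_{L \notin \cL_a} L$.

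There is no genuinely hard step; the result is a tautological reformulation. The only small point worth being careful about is that the author has adopted the convention (stated just after Definition \ref{definition-order-reflexive}) that $x \triangleL y$ implicitly carries $x \neq y$, but this is already consistent with the definition via $\cL$ (one cannot have $x \in L$ and $x \notin L$ simultaneously), so no separate case distinction is needed. The resulting identities will feed directly into the discussion of how $\cT_{\triangleL}$ behaves when $\cL$ is not assumed $T_0$-separating, which is the motivation of Section 4.
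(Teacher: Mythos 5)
Your proof is correct and is exactly the definitional unpacking the paper intends (the paper in fact states this lemma without proof, treating it as immediate from Definition \ref{definition-order} and the notation $\cL_a$). Your side remark that irreflexivity of $\triangleL$ is automatic, so the convention $x \neq y$ requires no extra case, is also accurate.
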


It is easily seen that when $\cL$ is $T_0$-separating, Lemmas \ref{lemma-orders-via-nests} and \ref{lemma-lower-upper-via-nests}
compensate to our remarks in Section 3.

\section{A Weaker Orderability Problem.}
\label{sec:5}

In this Section we find necessary conditions, so that $\cT_{in}^\le = \cT_<$, where
$<$ is a transitive relation, $\le$ is $<$ plus reflexivity and $\cT_{in}^\le$ is
the interval topology that is defined via $\le$. To achieve this, we will give
conditions such that $\cT_l = \cT_\leftarrow$ and $\cT_U = \cT_\rightarrow$.
The logic behind these conditions is the following. As we have seen in Theorem \ref{theorem-kyriakos chris},
linearity in the space is strongly related to $T_0$-separating nest. We believe that a first step towards the generalization of the orderability
problem will be to define a weaker version of $T_0$-separation, and we can achieve
this without using the notion of nest. It seems that the use of nests was vital
in the understanding and the description of the order-theoretic and of the many topological
properties of linearly ordered topological spaces. Even the fact that our approach
can be stated using nests and the material that was presented in Section 4,
we prefer to adopt a more general approach. The description of the order topology and
the interval topology in Section 4 will permit us to see that (i) if the nest
is $T_0$-separating we will get back to Section 3
and that (ii) nests might not help that much to describe more complex
ordering relations. This can be also seen in the topology $\cT_{\cL \cup \cR}$, which 
loses its meaning when it does not refer to linearly ordered sets.
\\

Let $X$ be a set and let $<$ be a transitive relation on $X$.

{\bf Condition 1} Let $x,y \in X$, such that $x \nleq y$. Then, there exist $z_i \in X$, $i=1,\cdots,n$,
such that $y < z_i$ and, if $w \in X$, such that $w < z_i$, then $x \nleq w$.
\\

{\bf Condition 2} Let $x,y \in X$, such that $y \nleq x$. Then, there exist $z_i \in X$, $i=1,\cdots,n$,
such that $z_i <y$ and, if $w \in X$, such that $z_i <w$, then $w \nleq x$.
\\

{\bf Condition 3} Let $x,y \in X$, such that $y <x$. Then, there exist $z_i \in X$, $i=1,\cdots,n$,
such that $z_i \nleq y$ and, if $w \in X$, such that $z_i \nleq w$, then $w<x$.
\\

{\bf Condition 4} Let $x,y \in X$, such that $x<y$. Then, there exist $z_i \in X$, $i=1,\cdots,n$,
such that $y \nleq z_i$ and, if $w \in X$, such that $w \nleq z_i$, then $x <w$.

\begin{proposition}\label{proposition-segment-equalities}
\begin{enumerate}
\item $\cT_l = \cT_\leftarrow$, if and only if Conditions 1 and 3 are satisfied.

\item $\cT_U = \cT_\rightarrow$, if and only if Conditions 2 and 4 are satisfied.
\end{enumerate}
\end{proposition}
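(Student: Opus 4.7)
The plan is to observe that each of the four Conditions is, after unpacking the definitions, precisely the standard subbase-openness criterion for one of the four subbasic inclusions involved. Under the reading forced by the Question of Section 3, namely that $\cT_l$ and $\cT_U$ are built from $\le$ while $\cT_\leftarrow$ and $\cT_\rightarrow$ are built from $<$, the relevant subbases are $\{X - \uparrow a = \{y : a \nleq y\} : a \in X\}$ for $\cT_l$, $\{(\leftarrow, a) = \{w : w < a\} : a \in X\}$ for $\cT_\leftarrow$, and symmetrically $X - \downarrow a = \{y : y \nleq a\}$ for $\cT_U$ and $(a, \rightarrow) = \{w : a < w\}$ for $\cT_\rightarrow$. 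I would rely on the standard fact that $S$ is open in a topology generated by a subbase $\mathcal{S}$ if and only if every point of $S$ lies in a finite intersection of members of $\mathcal{S}$ contained in $S$.

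For the forward direction of (1), assume $\cT_l = \cT_\leftarrow$. Given $x, y$ with $x \nleq y$, the point $y$ lies in $X - \uparrow x \in \cT_l = \cT_\leftarrow$, so there exist $z_1, \ldots, z_n$ with $y \in \bigcap_{i} (\leftarrow, z_i) \subseteq X - \uparrow x$. Unpacking, $y < z_i$ for each $i$, and whenever $w < z_i$ for every $i$ we have $w \in X - \uparrow x$, i.e.\ $x \nleq w$. This is exactly Condition 1. Dually, given $y < x$, the set $(\leftarrow, x) \in \cT_\leftarrow = \cT_l$ contains $y$, so $y \in \bigcap_i (X - \uparrow z_i) \subseteq (\leftarrow, x)$ for some $z_1, \ldots, z_n$, which unpacks directly to Condition 3.

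For the converse of (1), assuming Conditions 1 and 3, I would fix a subbasic $X - \uparrow a \in \cT_l$ and a point $y \in X - \uparrow a$; Condition 1 applied with $x := a$ supplies $z_1, \ldots, z_n$ with $y \in \bigcap_i (\leftarrow, z_i) \subseteq X - \uparrow a$, showing that $X - \uparrow a$ is $\cT_\leftarrow$-open, hence $\cT_l \subseteq \cT_\leftarrow$. Condition 3 yields $\cT_\leftarrow \subseteq \cT_l$ by the same argument applied to each subbasic $(\leftarrow, a)$. Part (2) is then proved by the strictly symmetric translation: Condition 2 asserts that each $X - \downarrow a \in \cT_U$ is $\cT_\rightarrow$-open, and Condition 4 that each $(a, \rightarrow) \in \cT_\rightarrow$ is $\cT_U$-open, and together they give $\cT_U = \cT_\rightarrow$ and conversely.

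The proof is essentially a dictionary look-up against the subbase-openness criterion, and I do not anticipate any genuine obstacle. The only subtle point worth flagging is that the asymmetric use of $\le$ on the interval-topology side and $<$ on the order-topology side is what makes the Conditions clean: because $a \le a$, the subbasic $X - \uparrow a$ omits $a$, so no degenerate ``$y = a$'' case arises, which matches the fact that the hypotheses ``$x \nleq y$'' and ``$y < x$'' themselves force $x \neq y$. If the subbase of $\cT_l$ had been written with $<$ instead of $\le$, one would have needed an extra clause to handle the fixed point $y = a$, and the proposition would fail already on $\mathbb{R}$.
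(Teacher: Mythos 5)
Your proof is correct and follows essentially the same route as the paper's: both reduce each inclusion to the subbase-openness criterion, matching Condition 1 with $\cT_l \subseteq \cT_\leftarrow$ and Condition 3 with $\cT_\leftarrow \subseteq \cT_l$ (and Conditions 2 and 4 symmetrically for part (2)). Your write-up is in fact somewhat more complete than the paper's, since you spell out both directions of each equivalence and make explicit the intended reading that $\cT_l,\cT_U$ are taken with respect to $\le$ while $\cT_\leftarrow,\cT_\rightarrow$ are taken with respect to $<$.
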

\begin{proof}
$\cT_l \subset \cT_\leftarrow$, if and only if for an arbitrary $y \in X - \uparrow x$, there
exists a basic-open set $B$, in $\cT_\leftarrow$, such that $y \in B \subset X - \uparrow x$. By
Condition 1, there exist $z_i \in X$, $i=1,\cdots,n$, such that $y \in \bigcap_{i=1}^n (\leftarrow,z_i) \subset
X- \uparrow x$. That $\cT_\leftarrow \subset \cT_l$ follows in a similar fashion, using Condition 3
and given this, the proof of $\cT_U = \cT_\rightarrow$ is straightforward.
\qed
\end{proof}

\begin{theorem}\label{theorem-stronger-orderability}
If Conditions 1,2,3 and 4 are satisfied, then $\cT_{in}^\le = \cT_<$.
\end{theorem}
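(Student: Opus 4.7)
The plan is to apply Proposition \ref{proposition-segment-equalities} directly and then combine the two resulting equalities by taking suprema of topologies. Specifically, Proposition \ref{proposition-segment-equalities}(1) gives $\cT_l = \cT_\leftarrow$ from Conditions 1 and 3, while Proposition \ref{proposition-segment-equalities}(2) gives $\cT_U = \cT_\rightarrow$ from Conditions 2 and 4. Since all four conditions are assumed, both equalities hold simultaneously.

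Next I would invoke the two relevant definitions already recorded in the paper: the interval topology is $\cT_{in}^\le = \cT_U \vee \cT_l$ (stated immediately after the introduction of upper and lower topologies), and the order topology is $\cT_< = \cT_\leftarrow \vee \cT_\rightarrow$ (the decomposition recalled in the paragraph preceding Lemma \ref{lemma-order-generalization}). The supremum of two topologies is the topology generated by the union of their subbases, so equal inputs produce equal outputs. Hence
\[
\cT_{in}^\le \;=\; \cT_U \vee \cT_l \;=\; \cT_\rightarrow \vee \cT_\leftarrow \;=\; \cT_<,
\]
which is the desired conclusion.

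The argument is a one-line corollary of Proposition \ref{proposition-segment-equalities}, so there is no real obstacle at this stage; the genuine work sits inside the proposition itself, where Conditions 1--4 are tailored precisely so that every subbasic open set of one topology can be written as a finite intersection of subbasic open sets of the other. The only thing worth double-checking is that no reflexivity/antisymmetry hypothesis on $<$ sneaks in: the derivation goes through purely on the basis of transitivity of $<$ (which is used to make $\le$ a preorder and thereby justify the definitions of $\uparrow x$, $\downarrow x$, $(\leftarrow,a)$, $(a,\rightarrow)$), and on the subbase-by-subbase comparison already handled by Proposition \ref{proposition-segment-equalities}.
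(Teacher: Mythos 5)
Your proposal is correct and matches the paper's (implicit) argument: the theorem is stated as an immediate consequence of Proposition \ref{proposition-segment-equalities}, obtained exactly as you describe by combining $\cT_l = \cT_\leftarrow$ and $\cT_U = \cT_\rightarrow$ with the decompositions $\cT_{in}^\le = \cT_U \vee \cT_l$ and $\cT_< = \cT_\leftarrow \vee \cT_\rightarrow$. Your closing observation that only transitivity of $<$ is needed is also consistent with the paper's setting.
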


\begin{remark}\label{remark-2}
If $(X,<)$ is a linearly ordered set, then Condition 1 is
obviously satisfied. Indeed, if $x \nleq y$, then $y<x$. So, there exists $z=x$,
such that $y<z$ and, if $w \in X$, such that $w<x$, then $x \nleq w$.
\end{remark}

Remark 2 shows that our conditions give weaker properties than those satisfied
from linear orders (and $T_0$-separating nests). Even the fact that we have a necessary but not
yet a sufficient condition for our Question of Section 3, Theorem \ref{theorem-stronger-orderability}
permits us to conclude that the ``distance'' of the interval topology $\cT_{in}^{\triangleLq}$, from the order topology $\cT_<$,
depends on how weaker are our conditions from $T_0$-separation (-linear order).


\end{document}